\documentclass[a4paper]{amsart}

\usepackage{amsmath,amssymb,amsthm}
\usepackage{fullpage}
\usepackage[colorlinks,linkcolor=blue,urlcolor=blue,citecolor=blue,destlabel=true]{hyperref}
\usepackage{tikz-cd}
\usepackage{mathtools}
\usepackage{array} 
\usepackage{caption}

\theoremstyle{plain}
\newtheorem{lemma}{Lemma}
\newtheorem{theorem}[lemma]{Theorem}
\newtheorem*{theorem-main}{Theorem~\ref{thm:main}}

\theoremstyle{definition}
\newtheorem{definition}[lemma]{Definition}

\newtheorem{question}[lemma]{Question}

\newcommand{\N}{\mathbb{N}}

\newcommand{\R}{\mathbb{R}}

\newcommand{\Z}{\mathbb{Z}}

\newcommand{\vr}[2]{\mathrm{VR}(#1;#2)}

\newcommand{\diam}{\mathrm{diam}}

\newcommand{\redhom}{\widetilde{H}}
\newcommand{\cl}{\mathrm{Cl}}

\newcommand{\lk}{\mathrm{lk}}
\newcommand{\st}{\mathrm{st}}
\newcommand{\susp}{\Sigma\,}
\newcommand{\SC}{\mathrm{SC}}
\newcommand{\kg}{\mathrm{KG}}

\begin{document}

\title{On Vietoris--Rips complexes of hypercube graphs}
\author{Micha{\l} Adamaszek}
\email{michal.adamaszek@mosek.com}
\author{Henry Adams}
\email{henry.adams@colostate.edu}

\begin{abstract}
We describe the homotopy types of Vietoris--Rips complexes of hypercube graphs at small scale parameters.
In more detail, let $Q_n$ be the vertex set of the hypercube graph with $2^n$ vertices, equipped with the shortest path metric.
Equivalently, $Q_n$ is the set of all binary strings of length $n$, equipped with the Hamming distance.
The Vietoris--Rips complex of $Q_n$ at scale parameter zero is $2^n$ points, and the Vietoris--Rips complex of $Q_n$ at scale parameter one is the hypercube graph, which is homotopy equivalent to a wedge sum of circles.
We show that the Vietoris--Rips complex of $Q_n$ at scale parameter two is homotopy equivalent to a wedge sum of 3-spheres, and furthermore we provide a formula for the number of 3-spheres.
Many questions about the Vietoris--Rips complexes of $Q_n$ at larger scale parameters remain open.
\end{abstract}

\keywords{Vietoris--Rips complexes, Hypercubes, Homotopy equivalence, Clique complexes, Kneser graphs}

\thanks{\emph{MSC codes.} 05E45, 55P10, 55U10, 55N31}

\maketitle

\section{Introduction}

It is an important question in applied topology to understand the Vietoris--Rips complexes of a variety of shapes, such as finite metric spaces and manifolds.
Indeed, one of the most frequently computed forms of persistent homology is obtained by building a Vietoris--Rips simplicial complex filtration on top of a potentially high-dimensional dataset~\cite{Carlsson2009,bauer2021ripser}.
Though these complexes are being constructed by scientists from a wide variety of different disciplines (see for example~\cite{bendich2016persistent,CarlssonIshkhanovDeSilvaZomorodian2008,chung2009persistence,de2007coverage,martin2010topology,topaz2015topological,xia2014persistent}), we nevertheless have a limited mathematical understanding of how the homotopy types of Vietoris--Rips complexes behave.
In this paper we consider Vietoris--Rips complexes of hypercube graphs.
These questions on hypercubes arose from work by Kevin Emmett, Ra\'{u}l Rabad\'{a}n, and Daniel Rosenbloom at Columbia University related to the persistent homology formed from genetic trees, reticulate evolution, and medial recombination~\cite{emmett2015quantifying,emmett2016topology}; see also~\cite{camara2016topological,chan2013topology}.

Vietoris--Rips complexes were independently invented by Vietoris in order to define a (co)homology theory for metric spaces~\cite{Vietoris27}, and by Rips for use in geometric group theory~\cite{Gromov}.
In both settings, one approximates a metric space by a Vietoris--Rips simplicial complex at a chosen scale.
Whereas Vietoris took a limit as the scale parameter approached zero, Rips instead often required the scale parameter to be sufficiently large, for example large enough to fill in holes in a $\delta$-hyperbolic group that are not essential up to quasi-isometry.
Applications of Vietoris--Rips complexes in computational topology~\cite{EdelsbrunnerHarer}, say to recover the ``shape" of a finite dataset~\cite{Carlsson2009}, instead take the scale parameter to be in an intermediate range: small depending on the curvature of the underlying space from which the data was sampled, but not tending to zero.
The theory behind these applications was initiated in work by Hausmann~\cite{Hausmann1995} and Latschev~\cite{Latschev2001}, and advanced in work by Chazal et al.~\cite{chazal2009gromov,ChazalOudot2008,ChazalDeSilvaOudot2014}.

There has been a lot of recent activity on advancing the theory of Vietoris--Rips complexes, including the study of Vietoris--Rips complexes of planar points~\cite{chambers2010vietoris,AFV}, the circle~\cite{Adamaszek2013,AA-VRS1,AAFPP-J},
metric graphs~\cite{gasparovic2018complete}, geodesic spaces~\cite{virk20201,virk2017approximations}, wedge sums~\cite{Adamaszek2020,lesnick2020quantifying}, metric subspaces~\cite{virk2021footprints}, and equivariant spaces~\cite{AdamsHeimPeterson,varisco2021equivariant}.
Vietoris--Rips complexes of higher-dimensional spheres have been studied through neighborhoods of the Kuratowski embedding~\cite{lim2020vietoris} and through Vietoris--Rips metric thickenings~\cite{AAF}, which have recently been shown to have the same persistent homology as Vietoris--Rips simplicial complexes~\cite{AMMW,MoyMasters}.
Furthermore, Hausmann's question about the homotopy connectivity of Vietoris--Rips complexes has been answered in the negative~\cite{virk2021counter}.
Vietoris--Rips complexes have recently been related to Bestvina--Brady Morse theory~\cite{zaremsky2019},
to Borsuk--Ulam theorems mapping into higher-dimensional codomains~\cite{adams2019borsuk,ABF2}, to Dowker's theorem and nerve lemmas~\cite{virk2021rips},
and to the filling radius~\cite{lim2020vietoris,okutan2019persistence} as studied by Gromov~\cite{gromov1983filling,gromov2007metric}
and Katz~\cite{katz1983filling,katz1989diameter,katz9filling,katz1991neighborhoods}.
The study of Vietoris--Rips complexes is finding connections to many different subareas of geometry and topology, including combinatorial topology, metric geometry, equivariant topology, polytope theory, and quantitative topology,
among others.
Our paper directly relates to the study of independence complexes of Kneser graphs~\cite{barmak2013star}, and to K\"{u}nneth formulas for persistent homology~\cite{carlsson2020persistent,gakhar2019k};
we hope that the study of Vietoris--Rips complexes of hypercube graphs will find further connections to other areas of mathematics.

In this paper we study the homotopy types of Vietoris--Rips complexes of hypercube graphs at small scale parameters.
Let $Q_n$ be the vertex set of the hypercube graph with $2^n$ vertices, equipped with the shortest path metric; see Figure~\ref{fig:cubes}.
The metric space $Q_n$ can also be described the set of all binary strings of length $n$, equipped with the Hamming distance.
The Vietoris--Rips complex $\vr{Q_n}{r}$ at scale parameter $r\ge 0$ is the simplicial complex with vertex set $Q_n$, in which $\sigma\subseteq Q_n$ is included as a simplex if the diameter of $\sigma$ is at most $r$.
We see that $\vr{Q_n}{0}$ is $2^n$ disjoint vertices, and that $\vr{Q_n}{1}$ is the hypercube graph, which is homotopy equivalent to the $((n-2)2^{n-1}+1)$-fold wedge sum of circles.
We show that $\vr{Q_n}{2}$ is homotopy equivalent to a wedge sum of 3-spheres.

\begin{figure}[htb]
\centering
\includegraphics[width=\textwidth]{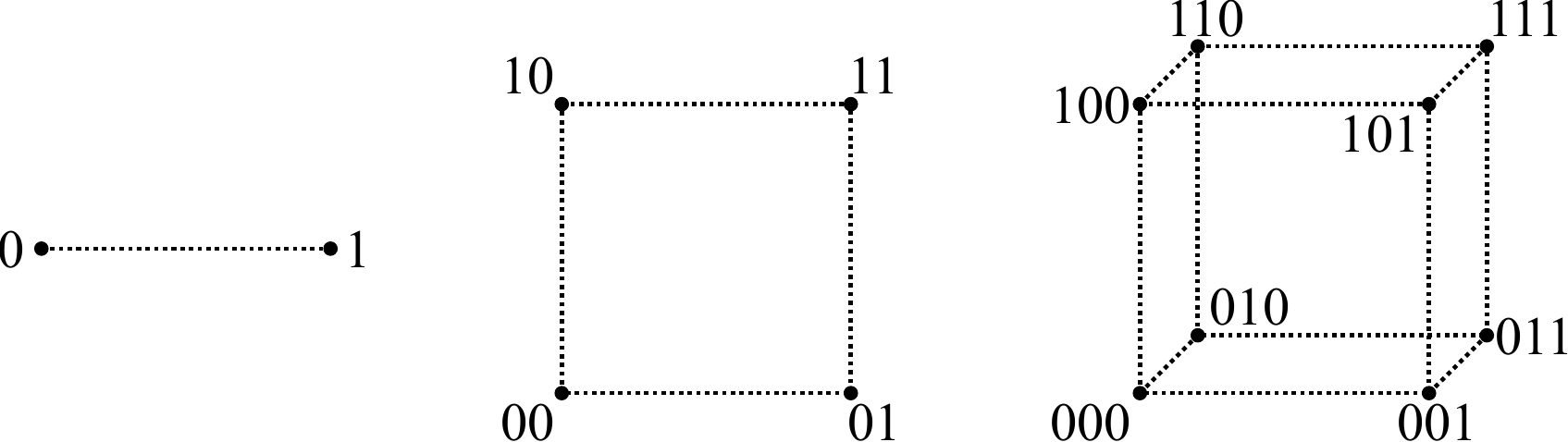}
\caption{
Hypercube graphs $Q_1$, $Q_2$, and $Q_3$.
}
\label{fig:cubes}
\end{figure}

\begin{table}
\def\arraystretch{1.2}
\begin{tabular}{| >{$} c <{$} | >{$} c <{$} | >{$} c <{$} | >{$} c <{$} | >{$} c <{$} | >{$} c <{$} | >{$} c <{$} | >{$} c <{$} | >{$} c <{$} | >{$} c <{$} |}
\hline
& n=1 & 2 & 3 & 4 & 5 & 6 & 7 & 8 & 9 \\
\hline
r=0 & S^0 & \bigvee^3 S^0 & \bigvee^7 S^0 & \bigvee^{15} S^0 & \bigvee^{31} S^0 & \bigvee^{63} S^0 & \bigvee^{127} S^0 & \bigvee^{255} S^0 & \bigvee^{511} S^0 \\
\hline
1 & * & S^1 & \bigvee^5 S^1 & \bigvee^{17} S^1 & \bigvee^{49} S^1 & \bigvee^{129} S^1 & \bigvee^{321} S^1 & \bigvee^{769} S^1 & \bigvee^{1793} S^1 \\
\hline
2 & * & * & S^3 & \bigvee^9 S^3 & \bigvee^{49} S^3 & \bigvee^{209} S^3 & \bigvee^{769} S^3 & \bigvee^{2561} S^3 & \bigvee^{7937} S^3 \\
\hline
3 & * & * & * & S^7 & 
& 
& 
& & \\
\hline
4 & * & * & * & * & S^{15} & & & & \\
\hline
5 & * & * & * & * & * & S^{31} & & & \\
\hline
6 & * & * & * & * & * & * & S^{63} & & \\
\hline
7 & * & * & * & * & * & * & * & S^{127} & \\
\hline
8 & * & * & * & * & * & * & * & * & S^{255} \\
\hline
\end{tabular}
\caption{Homotopy types of $\vr{Q_n}{r}$.}
\label{table:homotopy-types}
\end{table}

\begin{theorem}\label{thm:main}
Let $n\ge 3$.
The Vietoris--Rips complex $\vr{Q_n}{2}$ is homotopy equivalent to the $c_n$-fold wedge sum of 3-spheres, namely 
\[\vr{Q_n}{2}\simeq\textstyle{\bigvee^{c_n}}S^3,\quad\quad\text{where}\quad\quad c_n=\sum_{0\le j<i<n}(j+1)(2^{n-2}-2^{i-1}).\]
\end{theorem}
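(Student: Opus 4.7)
My plan is to proceed by induction on $n$, exploiting the cleaner recurrence $c_n = c_{n-1} + \binom{n}{3}\,2^{n-3}$ that follows by direct algebraic manipulation of the closed-form sum. For the base case $n=3$, observe that the only pairs of vertices of $Q_3$ at Hamming distance greater than $2$ are the four antipodal pairs, which form a perfect matching on the eight vertices. Consequently $\vr{Q_3}{2}$ is the clique complex of $K_8$ minus a perfect matching, which equals the simplicial join $(S^0)^{*4}\simeq S^3$, matching $c_3 = 1$.

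For the inductive step, split $Q_n = Q_n^0 \sqcup Q_n^1$ according to the last coordinate. Using the identity $d((x,a),(y,b)) = d_H(x,y) + |a-b|$, a simplex of $\vr{Q_n}{2}$ decomposes uniquely as $\sigma_0\sqcup\sigma_1$, where each $\sigma_a$ is a simplex of the corresponding copy of $\vr{Q_{n-1}}{2}$ on $Q_n^a$ and every cross-pair $(x,y)\in\sigma_0\times\sigma_1$ satisfies $d_H(x,y)\le 1$. Under the induction hypothesis, $\vr{Q_n^0}{2}\simeq\bigvee^{c_{n-1}}S^3$, and my task reduces to showing that building $\vr{Q_n}{2}$ out of $\vr{Q_n^0}{2}$ wedges on exactly $\binom{n}{3}\,2^{n-3}$ additional copies of $S^3$.

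I would carry this out by adjoining the vertices of $Q_n^1$ one at a time, together with all of their incident cross-simplices, in a carefully chosen order such as a linear extension of Hamming weight on the $Q_{n-1}$-projection. At each step, the link of the newly added vertex inside the current subcomplex is a Vietoris--Rips-type complex on a Hamming-radius-$1$ ball in $Q_{n-1}$, possibly augmented by the portion of $Q_n^1$ already attached. The mapping cone description of the inclusion then provides a long exact sequence relating homology before and after the attachment.

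The principal obstacle is the link analysis: one must establish that each link has well-understood homotopy type (for instance contractible, or a wedge of $2$-spheres with nullhomotopic inclusion into the current subcomplex), and that the aggregate effect is to wedge on precisely $\binom{n}{3}\,2^{n-3}$ copies of $S^3$ without contributing homology in other degrees. I expect this step to require a secondary induction or a discrete Morse argument on the neighborhood subcomplexes of $\vr{Q_{n-1}}{2}$. Once the homology is pinned down, simple connectivity of $\vr{Q_n}{2}$ (every $4$-cycle in the hypercube graph bounds a $3$-simplex of $\vr{Q_n}{2}$, since the four vertices of a cube $2$-face are pairwise within Hamming distance $2$, and such $4$-cycles generate the fundamental group of the hypercube $1$-skeleton) together with Hurewicz and Whitehead upgrade the result to the stated homotopy equivalence. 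Matching the count to the explicit two-index sum $\sum_{0\le j<i<n}(j+1)(2^{n-2}-2^{i-1})$ is then a direct combinatorial identity obtained by iterating the recurrence from $c_3 = 1$.
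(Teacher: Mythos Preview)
Your framework is sound and is in fact the same skeleton the paper uses: attach vertices one at a time, compute the link of each new vertex, and use a splitting $K\simeq(K\setminus v)\vee\Sigma\,\lk_K(v)$ whenever the link inclusion is null-homotopic. The paper runs this as a single induction on the integer $m$ (not just on $n$), adding the vertex $m-1$ to $\cl(G_{m-1}^2)$, and its entire substance is the link computation you defer: it proves that the link $\cl(L_m^2)$ is homotopy equivalent to a wedge of $\alpha(m-1)$ copies of $S^2$, where $\alpha$ is an explicit function of the binary expansion of $m-1$. That computation is not routine; it identifies a subcomplex of the link with an induced piece of the independence complex of the Kneser graph $\kg_{n,2}$ and applies Barmak's star-cluster technique to show it is a wedge of $2$-spheres, then peels off the remaining vertices by a secondary induction. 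Once every link is a wedge of $2$-spheres and (inductively) every $\cl(G_{m-1}^2)$ is a wedge of $3$-spheres, null-homotopy of the inclusion is automatic, and the splitting gives the homotopy type directly---no Hurewicz/Whitehead step is needed.

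Your proposal therefore has a genuine gap exactly where you flag it: the link analysis is the whole proof, and you have not done it. Worse, your organizational choice may make it harder. By inducting on $n$ and inserting all of $Q_n^0$ before any of $Q_n^1$, the link of a new vertex $(x,1)$ always contains the full $(n-1)$-simplex on $\{(x,0)\}\cup\{(x_k,0):0\le k\le n-2\}$ in addition to the already-attached $Q_n^1$-neighbours, and the cross-edges between these two pieces are governed by $|A\triangle\{k\}|\le 1$, which is a more tangled incidence pattern than the paper's. If instead you add vertices in numerical order, the $Q_n^1$-part of your link is literally the paper's $L_{x+1}^2$ (shifted down one dimension), so you would still need exactly the paper's lemma plus an extra argument for how the $Q_n^0$-simplex glues on. Either way you cannot avoid a computation at the level of Barmak's star clusters; ``a secondary induction or a discrete Morse argument'' is not yet a proof. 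Your recurrence $c_n=c_{n-1}+\binom{n}{3}2^{n-3}$ and your simple-connectivity argument are both correct, but they only become relevant once the links are understood.
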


The first twelve values of $c_n$, starting at $n=3$, are \[1,\ 9,\ 49,\ 209,\ 769,\ 2561,\ 7937,\ 23297,\ 65537,\ 178177,\ 471041,\ 1216513,\ \ldots.\]
We note that $c_n=A027608(n-3)$ for all $n\ge 3$, where the sequence A027608 is described on the Online Encyclopedia of Integer Sequences (OEIS)~\cite{sloane2003line}.
This sequence corresponds to the coefficients in the Taylor series expansion of $\frac{1}{(1-x)(1-2x)^4}$;
see~\cite{albert2012enumeration,crane2015left,hinz2013tower} for other appearances of this sequence.

Table~\ref{table:homotopy-types} gives some of the known homotopy types of $\vr{Q_n}{r}$.
When $r=n-1$, we observe that $\vr{Q_n}{n-1}$ is the boundary of the $(2^{n-1})$-dimensional cross-polytope with $2^n$ vertices, and therefore $\vr{Q_n}{n-1}$ is homeomorphic to the $(2^{n-1}-1)$-dimensional sphere.
For example, when $n=3$, the 1-skeleton of $\vr{Q_3}{2}$ contains all possible edges, except for the edges between the ``antipodal'' vertex pairs $000$ and $111$, $001$ and $110$, $010$ and $101$, and $011$ and $100$ that are at a distance of $n=3$ apart.
Therefore the clique complex is the boundary of the $4$-dimensional cross-polytope with $8$ vertices.
This is the easiest way to see that $\vr{Q_3}{2}=S^3$.
Since $Q_n$ has diameter $n$, for $r\ge n$ the Vietoris--Rips complex $\vr{Q_n}{r}$ is a simplex, and therefore is contractible.

A better understanding of the homotopy types of $\vr{Q_n}{r}$ could relate to stronger versions of the K\"{u}nneth formula for persistent homology of Vietoris--Rips complexes.
Indeed,~\cite{carlsson2020persistent} considers a K\"{u}nneth formula for persistent homology in which the metric on the product $X\times Y$ is given by the sum $d_X+d_Y$.
Motivated by this K\"{u}nneth formula,~\cite[Proposition~4.10]{carlsson2020persistent} gives the persistent homology of $\vr{Q_n}{r}$ for homology up to dimension two.
The 2-dimensional homology is always zero, and all 1-dimensional homology appears prior to scale parameter $r<2$.
Our Theorem~\ref{thm:main} furthermore describes the 3-dimensional homology of $\vr{Q_n}{r}$ that appears at scale $r=2$.
See~\cite{gakhar2019k,lim2020vietoris} and~\cite[Proposition~10.2]{AA-VRS1} for versions of the K\"{u}nneth formula for Vietoris--Rips complexes which hold when the metric on $X\times Y$ is instead the supremum metric.

Many questions about the Vietoris--Rips complexes of $Q_n$ at larger scale parameters remain open.
The topology when $r=3$ is more complicated than the case of $r=2$, in the sense that nonzero reduced homology now appears in more than one dimension.
Indeed, computational evidence shows that various homology groups of $\vr{Q_n}{3}$ are as follows.
The cases of $n=5, 6, 7$ (with integer coefficients) were computed in polymake~\cite{polymake:2000}, and the cases of $n=8, 9$ (with $\Z/2\Z$ coefficients) were computed by Simon Zhang using Ripser++~\cite{zhang2020gpu}, which is a GPU-accelerated version of Ripser~\cite{bauer2021ripser}.

\begin{align*}
&H_i(\vr{Q_5}{3};\Z) &&\cong \Z \text{ for }i=4, &&\cong \Z^{10}\text{ for }i=7, &&\cong 0\text{ for }1\le i\le 7\text{ with }i\neq 4,7\\
&H_i(\vr{Q_6}{3};\Z) &&\cong \Z^{11} \text{ for }i=4, &&\cong \Z^{60}\text{ for }i=7, &&\cong 0\text{ for }1\le i\le 7\text{ with }i\neq 4,7\\
&H_i(\vr{Q_7}{3};\Z) &&\cong \Z^{71} \text{ for }i=4, &&\cong \Z^{280}\text{ for }i=7, &&\cong 0\text{ for }1\le i\le 7\text{ with }i\neq 4,7\\
&H_i(\vr{Q_8}{3};\tfrac{\Z}{2\Z}) &&\cong (\tfrac{\Z}{2\Z})^{351} \text{ for }i=4, &&\cong (\tfrac{\Z}{2\Z})^{1120}\text{ for }i=7, &&\cong 0\text{ for }1\le i\le 9\text{ with }i\neq 4,7\\
&H_i(\vr{Q_9}{3};\tfrac{\Z}{2\Z}) &&\cong (\tfrac{\Z}{2\Z})^{1471} \text{ for }i=4, &&\cong (\tfrac{\Z}{2\Z})^{4032}\text{ for }i=7, &&\cong 0\text{ for }1\le i\le 9\text{ with }i\neq 4,7\\
\end{align*}
The sequence 1, 10, 60, 280, 1120, 4032, \ldots might lead one to conjecture that $H_7(\vr{Q_n}{3};\Z)\cong \Z^{2^{n-4}\binom{n}{4}}$ for all $n\ge 4$.
Similarly, the sequence 1, 11, 71, 351, 1471, \ldots might lead one to conjecture that $H_4(\vr{Q_n}{3};\Z)\cong \Z^{\sum_{i=4}^{n-1}2^{i-4}\binom{i}{4}}$ for all $n\ge 5$.
Interestingly, $2^{n-4}\binom{n}{4}$ is the number of 4-dimensional cube subgraphs in the $n$-dimensional hypercube graph.
Also, the 4-dimensional skeleton of the $n$-dimensional hypercube polytope is homotopy equivalent to a $\left(\sum_{i=4}^{n-1}2^{i-4}\binom{i}{4}\right)$-fold wedge sum of 4-spheres.

\begin{question}\label{ques:wedge}
Are the Vietoris--Rips complexes $\vr{Q_n}{r}$ always homotopy equivalent to a wedge sum of spheres?
\end{question}

\begin{question}\label{ques:homology}
In what dimensions do the Vietoris--Rips complexes $\vr{Q_n}{r}$ have nontrivial reduced homology for $3\le r\le n-2$?
\end{question}

\begin{question}\label{ques:homotopy}
What are the homotopy types of the Vietoris--Rips complexes $\vr{Q_n}{r}$ for $3\le r\le n-2$?
\end{question}

We overview notation and preliminaries in Section~\ref{sec:notation}, prove our main result (Theorem~\ref{thm:main}) in Section~\ref{sec:results}, and conclude by listing some more refined open questions in Section~\ref{sec:conclusion}.

\section{Notation and preliminary results}\label{sec:notation}

We refer the reader to~\cite{Hatcher} for more background on topology.

\subsection*{Topological spaces}

When two topological spaces $X$ and $Y$ are homeomorphic, we write $X=Y$.
We write $X\simeq Y$ to denote that they are homotopy equivalent.
The main results of this paper will show that certain topological spaces are homotopy equivalent to a wedge sum of spheres.
We let $S^n$ denote the $n$-dimensional sphere $S^n=\{x\in\R^{n+1}~|~\|x\|=1\}$.

Given two topological spaces $X$ and $Y$, we denote their wedge sum by $X \vee Y$;
this is the space obtained by gluing $X$ and $Y$ together at a single point.
If $X$ and $Y$ are connected CW complexes, then the homotopy type of $X\vee Y$ is independent of the points chosen to glue together.
For $n\ge 0$ we let $\bigvee^n X$ denote the $n$-fold wedge sum of $X$, namely $\bigvee^n X = X\vee X\vee \ldots \vee X$. 
By convention we let the 0-fold wedge sum of any space $X$ be a single point.

Let $X$ be a topological space.
We let $C(X)=(X\times[0,1])/(X\times \{0\})$ denote the cone over $X$, which is a contractible space.
The suspension of $X$ is formed by joining two cones along their boundary $X$.
We denote the suspension of $X$ by $\susp X = (X \times [0,1]) / \sim$, where $(x,0)\sim (x',0)$ and $(x,1)\sim (x',1)$ for all $x,x'\in X$.
For spheres we have $\susp S^n = S^{n+1}$.

Let $X$ and $Y$ be topological spaces, let $A\subseteq X$ be a subspace, and let $f\colon A\to Y$ be a continuous map.
The adjunction space $Y\cup_f X=(Y\coprod X)/\sim$ is formed by taking a quotient of the disjoint union, where the equivalence relation is generated by $a \sim f(a)$ for all $a\in A$.
Adjunction spaces satisfy convenient homotopy invariance properties.
Indeed, if the inclusion $A\hookrightarrow X$ is a cofibration (for example an inclusion of CW complexes), and if the maps $f,g\colon A\to Y$ are homotopy equivalent, then we have a homotopy equivalence of adjunction spaces $Y\cup_f X\simeq Y\cup_g X$.
Indeed, this is described in~\cite[7.5.5 (Corollary~1)]{brown2006topology} or~\cite[Proposition~5.3.3]{tom2008algebraic}.

\subsection*{Simplicial complexes}

A simplicial complex $K$ on a vertex set $V$ is a family of subsets of $V$, including all singletons, such that if $\sigma\in K$ and $\tau \subseteq \sigma$, then also $\tau\in K$.
We identify a simplicial complex with its geometric realization, which is a topological space.
The \emph{star} of a vertex $v\in C$ is $\st_K(v)=\{\sigma\in K~|~\sigma \cup \{v\}\in K\}$.
Note that the star is contractible since it is a cone with apex $v$.
The \emph{link} of a vertex $v$ is $\lk_K(v)=\{\sigma\in K~|~v\notin \sigma\text{ and }\sigma \cup \{v\}\in K\}$.
For $v\in V$ a vertex, we let $K\setminus v$ denote the induced simplicial complex on vertex set $V\setminus v$; the simplices of $K\setminus v$ are all those simplices $\sigma\in K$ such that $v\notin \sigma$.

\subsection*{A splitting up to homotopy type}

The following preliminary result is likely well-known to combinatorial topologists.

\begin{lemma}\label{lem:splitting}
Let $K$ be a simplicial complex, and let $v\in K$ be a vertex such that the inclusion $\iota\colon \lk_K(v)\hookrightarrow K\setminus v$ is a null-homotopy.
Then up to homotopy we have a splitting $K\simeq (K\setminus v) \vee \Sigma\ \lk_K(v)$.
\end{lemma}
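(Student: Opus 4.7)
The plan is to realize $K$ as an adjunction space built from $K\setminus v$ by attaching the closed star $\st_K(v)$, and then use the null-homotopy hypothesis together with the homotopy invariance of adjunction spaces recalled in Section~\ref{sec:notation} to split $K$ as the claimed wedge sum.

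First I would observe that $K = (K\setminus v) \cup \st_K(v)$ and that $(K\setminus v) \cap \st_K(v) = \lk_K(v)$. Moreover, $\st_K(v)$ is the simplicial join $\{v\}*\lk_K(v)$, whose geometric realization is a cone $C(\lk_K(v))$ with apex $v$ and base $\lk_K(v)$. Consequently, $K$ is homeomorphic to the adjunction space
\[ K \;=\; (K\setminus v) \cup_{\iota} C(\lk_K(v)), \]
that is, the mapping cone of the inclusion $\iota\colon \lk_K(v) \hookrightarrow K\setminus v$.

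Next, the inclusion $\lk_K(v)\hookrightarrow C(\lk_K(v))$ is a cofibration (it is an inclusion of CW complexes), and by hypothesis $\iota$ is homotopic to a constant map $c\colon \lk_K(v)\to K\setminus v$ sending everything to some point $p\in K\setminus v$. Applying the invariance of adjunction spaces under homotopic attaching maps cited from~\cite{brown2006topology,tom2008algebraic}, we obtain
\[ K \;\simeq\; (K\setminus v) \cup_{c} C(\lk_K(v)). \]
The right-hand side collapses the base $\lk_K(v)$ of the cone to the single point $p$, turning the cone into the quotient $C(\lk_K(v))/\lk_K(v) = \susp \lk_K(v)$ attached to $K\setminus v$ at the point $p$. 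This adjunction is precisely the wedge $(K\setminus v) \vee \susp \lk_K(v)$, giving the desired splitting.

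The argument is short and the only real subtlety is checking the cofibration hypothesis required to apply homotopy invariance of adjunction spaces; this is automatic for the CW pair $(C(\lk_K(v)),\lk_K(v))$, so no serious obstacle arises. Everything else reduces to the standard identifications $\st_K(v) \cong C(\lk_K(v))$ and $C(X)/X \cong \susp X$.
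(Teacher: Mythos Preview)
Your proof is correct and follows essentially the same approach as the paper: both realize $K$ as the mapping cone $(K\setminus v)\cup_\iota \st_K(v)$, invoke homotopy invariance of adjunction spaces to replace $\iota$ by a constant map, and then identify the resulting space as the wedge $(K\setminus v)\vee\susp\lk_K(v)$. The only cosmetic difference is that the paper routes the identification $\st_K(v)/\lk_K(v)\simeq\susp\lk_K(v)$ through~\cite[Example~0.13]{Hatcher}, whereas you use the direct identification $C(X)/X=\susp X$.
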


\begin{proof}
By definition of a null-homotopy, the inclusion $\iota\colon \lk_K(v)\hookrightarrow K\setminus v$ is homotopic to a constant map $c\colon \lk_K(v)\to K\setminus v$ that maps to a single point $p\in K\setminus v$.
By the homotopy invariance properties of adjunction spaces, we have the following homotopy equivalence:
\[ K = (K\setminus v)\cup_\iota\st_K(v) \simeq (K\setminus v)\cup_c \st_K(v) = (K\setminus v)\vee \bigl(\{p\}\cup_c\st_K(v)\bigr). \]
In the right-most space, the wedge sum is taken over the point $p$, and the adjunction space $\{p\}\cup_c \st_K(v)$ is arising from the map $c\colon \lk_K(v)\to \{p\}$.
We have 
\[ \{p\}\cup_c\st_K(v) = \st_K(v) / \lk_K(v) \simeq \st_K(v) \cup C(\lk_K(v)) = \susp\lk_K(v), \]
where the homotopy equivalence is described, for example, in~\cite[Example~0.13]{Hatcher}.
Recall $C(\cdot)$ denotes the cone over a topological space.
The last equality is since $\st_K(v)$ is also a cone over $\lk_K(v)$, with apex vertex $v$.
It therefore follows that
\[ K \simeq (K\setminus v)\vee \bigl(\{p\}\cup_c\st_K(v)\bigr) \simeq (K\setminus v)\vee \susp\lk_K(v). \]
\end{proof}

\subsection*{Graphs}

In this paper we consider only simple graphs, i.e.\ graphs in which the edges are undirected, there are no multiple edges, and there are no loop edges.
Let $G$ be a simple graph with vertex set $V$.
The set $E$ of edges is a collection of subsets of $V$ of size two.

We let $N_G(v)=\{u\in V~|~uv\in E\}$ denote the (open) \emph{neighborhood} of vertex $v$ in graph $G$.
That is, $N_G(v)$ contains all vertices not equal to $v$ that are connected to $v$ by an edge.

For $V'\subseteq V$, the subgraph of $G$ induced by the vertex subset $V'$ has $V'$ as its vertex set, and as its edges all edges of $G$ whose vertices are each elements of $V'$.

We let $\cl(G)$ denote the \emph{clique complex} of $G$.
This simplicial complex is the maximal simplicial complex with 1-skeleton equal to $G$.
That is, the vertex set is $V$, and $\cl(G)$ contains a finite set $\sigma\subseteq V$ as a simplex if for any two vertices $u,v\in \sigma$, we have that $uv$ is an edge in $G$.

If $G$ is a graph with vertex set $V$ and edge set $E$, then its \emph{complement} graph $\overline{G}$ also has vertex set $V$, and $\overline{G}$ has an edge $uv$ if and only if $uv\notin E$.
The \emph{independence complex} $I(G)$ of a graph $G$, which is a well-studied simplicial complex in combinatorial topology, can be defined as $I(G)=\cl(\overline{G})$.
Therefore, the study of clique complexes of graphs is closely related to the study of independence complexes.

\subsection*{Vietoris--Rips complexes}

Let $X$ be a metric space, equipped with a distance function $d\colon X\times X\to \R$.
Given a subset $\sigma\subseteq X$, we define its \emph{diameter} to be $\diam(X)=\sup_{x,x'\in\sigma}d(x,x')$.

\begin{definition}
The \emph{Vietoris--Rips complex of $X$ at scale $r$}, denoted $\vr{X}{r}$, is the simplicial complex with vertex set $X$ that contains a finite simplex $\sigma\subseteq X$ if and only if $\diam(\sigma)\le r$.
\end{definition}

Note that a Vietoris--Rips complex is the maximal simplicial complex that can be built on top of its 1-skeleton, and hence it is a \emph{clique} or a \emph{flag} simplicial complex.

We remark that this definition makes sense for any symmetric function $d\colon X\times X\to \R$, even if it is not a metric (for example, even if it does not satisfy the triangle inequality).

\subsection*{Star clusters and Kneser graphs}

We now review star clusters, introduced by Barmak in~\cite{barmak2013star} for studying clique and independence complexes of graphs.
For $K$ a simplicial complex and for $\sigma\in K$ a simplex, the \emph{star cluster} of $\sigma$ is the simplicial complex $\SC_K(\sigma)=\cup_{v\in \sigma}\st_K(v)$; see Figure~\ref{fig:starClusters}.
Lemma~3.2 of \cite{barmak2013star} states that if $K$ is a clique simplicial complex, then $\SC_K(\sigma)$ is contractible for all simplices $\sigma\in K$.
We will simplify notation by writing $\SC(\sigma)$ when $K$ is clear from context.

\begin{figure}[htb]
\centering
\def\svgwidth{4in}
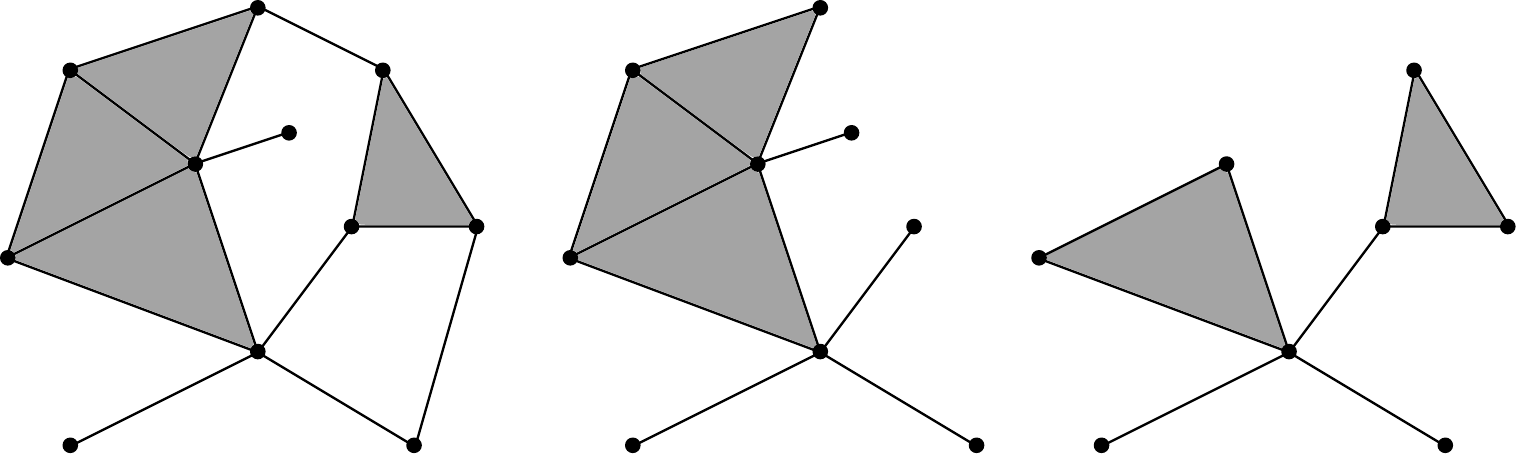
\caption{(Left) A clique simplicial complex, (middle) the star cluster of a 2-simplex $\sigma$, and (right) the star cluster of a 1-simplex $\tau$.}
\label{fig:starClusters}
\end{figure}

The Kneser graph $\kg_{n,k}$ has as its vertices the $k$-element subsets from a set of size $n$, where two vertices are adjacent if their corresponding subsets are disjoint.
Kneser conjectured in 1955~\cite{kneser1955aufgabe} that for $n\ge 2k\ge 2$, the chromatic number of $\kg_{n,k}$ (i.e.\ the number of vertex colors required so that adjacent vertices receive different colors) is $n-2k+2$.
This was first proven in 1978 by Lov{\'a}sz~\cite{Lovasz1978}; the proof uses topological techniques and helped initiate the field of topological combinatorics.
In~\cite[Theorem~4.11]{barmak2013star}, Barmak uses star clusters to prove that for $n\ge 4$, the independence complex of the Kneser graph $I(\kg_{n,2})$ is homotopy equivalent to a wedge sum of ${n-1 \choose 3}$ copies of the 2-sphere.
We use the ideas behind this proof in our verification of Lemma~\ref{lem:link} (in Section~\ref{sec:results}).

\subsection*{Hypercube graphs}

Let $Q_n$ be the set of all $2^n$ binary strings of length $n$.
For example, $Q_3$ consists of eight strings: 000, 001, 010, 011, 100, 101, 110, and 111.
We equip the set $Q_n$ with the Hamming distance: the distance between two binary strings is the number of positions in which their entries differ.
Equivalently, $Q_n$ is the vertex set of the $n$-dimensional hypercube graph (with $2^n$ vertices), equipped with the shortest path distance in the hypercube graph; see Figure~\ref{fig:cubes}.
Alternatively, $Q_n$ is the set of lattice points in $\R^n$ with all coordinates equal to $0$ or $1$, equipped with the $L^1$ or taxicab metric.

\section{Results}\label{sec:results}

Let $n$ be a positive integer.
The Vietoris--Rips complex of $Q_n$ at scale parameter 0 is simply the simplicial complex with $2^n$ vertices and no higher-dimensional simplices.
Therefore $\vr{Q_n}{0}$ is the union of $2^n$ distinct vertices, or alternatively, $\vr{Q_n}{0}=\bigvee^{2^n-1}S^0$ is the $(2^n-1)$-fold wedge sum of 0-spheres.

The Vietoris--Rips complex of $Q_n$ at scale parameter 1 is the underlying hypercube graph.
This is a connected graph with $2^n$ vertices and $n2^{n-1}$ edges, which therefore is homotopy equivalent to the wedge sum of $n2^{n-1}-2^n+1=(n-2)2^{n-1}+1$ circles.
So $\vr{Q_n}{1}\simeq\bigvee^{(n-2)2^{n-1}+1}S^1$ is homotopy equivalent to the $((n-2)2^{n-1}+1)$-fold wedge sum of circles.

The Vietoris--Rips complex of $Q_n$ at scale parameter $n-1$ is is the boundary of the $(2^{n-1})$-dimensional cross-polytope with $2^n$ vertices.
To see this, note that the 1-skeleton of $\vr{Q_n}{n-1}$ consists of all possible edges, except that it is missing the ``antipodal" edges between vertices (such as $00\ldots0$ and $11\ldots1$) that are at the maximal distance $n$ apart.
Taking the clique complex gives that $\vr{Q_n}{n-1}$ is the boundary of the $(2^{n-1})$-dimensional cross-polytope with $2^n$ vertices, and therefore $\vr{Q_n}{n-1}$ is homeomorphic to the $(2^{n-1}-1)$-dimensional sphere.

Our main result is to characterize the homotopy type of the Vietoris--Rips complex of the hypercube graphs at scale parameter 2, which for convenience we restate below.
It is easy to see that $\vr{Q_1}{2}$ and $\vr{Q_2}{2}$ are contractible.

\begin{theorem-main}
Let $n\ge 3$.
The Vietoris--Rips complex $\vr{Q_n}{2}$ is homotopy equivalent to the $c_n$-fold wedge sum of 3-spheres, namely 
\[\vr{Q_n}{2}\simeq\textstyle{\bigvee^{c_n}}S^3,\quad\quad\text{where}\quad\quad c_n=\displaystyle{\sum_{0\le j<i<n}(j+1)(2^{n-2}-2^{i-1})}.\]
\end{theorem-main}

We now build notation to be used in the proof of Theorem~\ref{thm:main}.
Recall that the metric space $Q_n$ is the set of all $2^n$ binary strings of length $n$, namely the numbers from $0$ to $2^n-1$ written in binary, equipped with the Hamming distance.
In order to enable proofs by induction, we consider the metric spaces $V_m$, consisting of all numbers from $0$ to $m-1$ written as binary strings, also equipped with the Hamming distance.
Note that $V_{2^n}=Q_n$.
To ease notation in the following proofs, we also switch to clique complex notation.
Let $G_m^r $ be the graph whose vertex set is $V_m$, and that has an edge between two vertices if their Hamming distance is at most $r$; see Figures~\ref{fig:Gm1} and~\ref{fig:Gm2}.
Note that $\cl(G_m^r)=\vr{V_m}{r}$, and so in particular $\cl(G_{2^n}^r)=\vr{Q_n}{r}$.
In Theorem~\ref{thm:Gm2} we give the homotopy types of $\cl(G_m^2)$ for all $m$, and hence Theorem~\ref{thm:main} will follow as a corollary after letting $m=2^n$.

\begin{figure}[h]
\includegraphics[width=\textwidth]{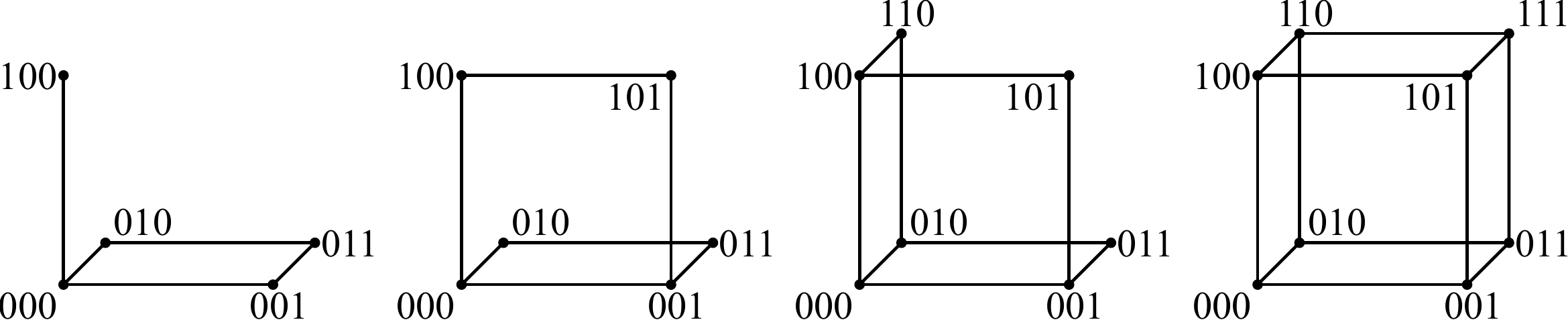}
\caption{
The graphs $G_m^1$ for $m=5,6,7,8$.
}
\label{fig:Gm1}
\end{figure}

\begin{figure}[h]
\includegraphics[width=\textwidth]{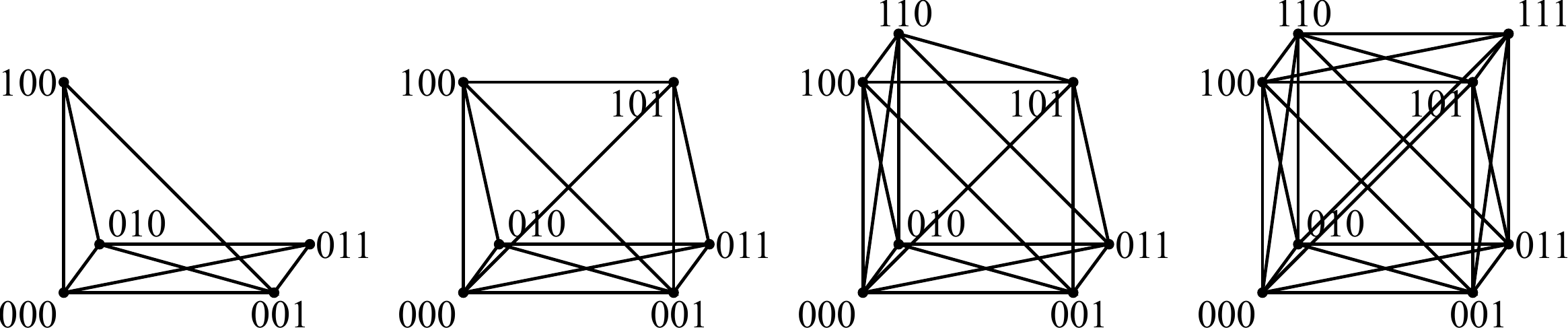}
\caption{
The graphs $G_m^2$ for $m=5,6,7,8$.
}
\label{fig:Gm2}
\end{figure}

Let $L_m^r$ be the subgraph of $G_m^r$ induced by the vertex set consisting of the (open) neighborhood of the ``last" vertex $m-1$.
That is, the vertices of $L_m^r$ are all vertices from $0$ up to $m-2$ whose Hamming distance from $m-1$ is at most $r$, and we have an edge between two vertices of $L_m^r$ if and only if their Hamming distance is at most $r$; see Figure~\ref{fig:Lm2}.
Note that $\cl(L_m^r)$ is the link of the vertex $m-1$ in the simplicial complex $\cl(G_m^r)$; this is the reason for our interest in the subgraphs $L_m^r$.

\begin{figure}[h]
\includegraphics[width=\textwidth]{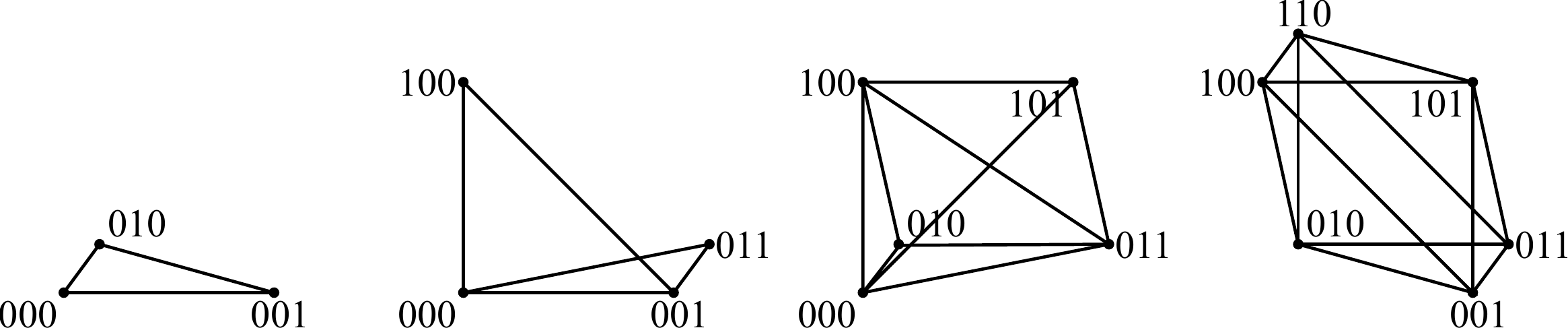}
\caption{
The graphs $L_m^2$ for $m=5,6,7,8$.
Note $\cl(L_8^2)=S^2$, as it is the boundary of the 3-dimensional cross-polytope.
}
\label{fig:Lm2}
\end{figure}

We now focus attention again on the case $r=2$.
Suppose we have a nonnegative integer $x$ with binary representation $x=2^{i_1}+2^{i_2}+\cdots+2^{i_l}$, where $i_1>i_2>\cdots >i_l$.
Then define
\begin{equation}
\label{eq:alpha}
\alpha(x)=(i_3+1)+2(i_4+1)+\cdots+(l-2)(i_l+1)=\sum_{s=3}^l (s-2)(i_s+1)=\sum_{s=3}^l (s-2)i_s + {\textstyle{l-1 \choose 2}}.
\end{equation}
In particular, note $\alpha(x)=0$ if and only if $x$ has at most two ones in its binary representation.
For example, we compute $\alpha(7)=1$.
The following lemma will generalize the fact that $\cl(L_8^2)\simeq\bigvee^{\alpha(7)}S^2 = S^2$.

\begin{lemma}\label{lem:link}
For all $m\ge 1$, we have a homotopy equivalence
\[\cl(L_m^2)\simeq\textstyle{\bigvee^{\alpha(m-1)}S^2}.\]
\end{lemma}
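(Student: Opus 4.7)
The plan is to proceed by strong induction on $m$, using a combinatorial description of $L_m^2$ through \emph{$y$-labels}. For each $u \in V_m$ set $y_u := u \oplus (m-1)$, regarded as a subset of $\{0, 1, 2, \ldots\}$. Since $u \in V_m$ iff $y_u = \emptyset$ or $\max(y_u) \in I := \{i_1, \ldots, i_l\}$, a vertex $u$ lies in $L_m^2$ iff $y_u$ is a nonempty subset of size $1$ or $2$ with $\max(y_u) \in I$; two vertices are adjacent iff $|y_u \triangle y_{u'}| \le 2$.

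For the base case $l \le 2$ we have $\alpha(m-1) = 0$, and the claim is that $\cl(L_m^2)$ is contractible. The vertex $0 \in V_m$ has $y$-label $I$ of size $l \le 2$ and so lies in $L_m^2$; every other vertex of $L_m^2$ has $y$-label of size at most $2$, hence at most two $1$-bits in its binary expansion, so is within Hamming distance $2$ of $0$. Thus $0$ is adjacent to every other vertex and $\cl(L_m^2)$ is a cone with apex $0$, hence contractible.

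For the inductive step $l \ge 3$, the plan is to apply Lemma~\ref{lem:splitting} to the vertex $v = m - 1 - 2^{i_1}$, whose $y$-label is the singleton $\{i_1\}$. A direct analysis of adjacencies shows that $\lk_K(v)$ is the clique complex of a graph which is the disjoint union of two cliques joined by a matching: an ``upper'' clique on the $i_1$ vertices with labels $\{i_1, p\}$ for $p \in \{0, \ldots, i_1 - 1\}$, a ``lower'' clique on the $l-1$ vertices with labels $\{i_s\}$ for $s \in \{2, \ldots, l\}$, and a matching of $l-1$ edges pairing $\{i_1, i_s\}$ with $\{i_s\}$ for each $s \ge 2$. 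No simplex spans the two cliques beyond these matching edges, so up to homotopy the complex collapses to two points connected by $l-1$ arcs, giving $\lk_K(v) \simeq \bigvee^{l-2} S^1$ and hence $\Sigma \lk_K(v) \simeq \bigvee^{l-2} S^2$.

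The remaining ingredients are to verify that the inclusion $\lk_K(v) \hookrightarrow K \setminus v$ is null-homotopic and to identify the homotopy type of $K \setminus v$ recursively. For the null-homotopy, the plan is to use Barmak's star cluster technique~\cite{barmak2013star}: since $K \setminus v$ is still a clique complex, the star cluster $\SC_{K \setminus v}(\sigma)$ is contractible for every simplex $\sigma$, and one seeks a $\sigma$ (or a suitable union of such star clusters) that covers $\lk_K(v)$ inside $K \setminus v$. For the recursive identification, the plan is either to produce a homotopy equivalence $K \setminus v \simeq \cl(L_{m'}^2)$ for a strictly smaller $m'$, or to iterate the same splitting across further vertices. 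The resulting sphere counts should telescope against the recurrence $\alpha(m - 1) - \alpha((m-1) - 2^{i_l}) = (l-2)(i_l+1)$ obtained by peeling off the lowest bit, yielding the claimed total $\alpha(m-1) = \sum_{s=3}^l (s-2)(i_s+1)$. The main obstacle is precisely the null-homotopy verification: the ``lower'' maximal simplex $\{\{i_s\}\}_{s \ge 2}$ of $\lk_K(v)$ is not contained in the star cluster of any single simplex of $K \setminus v$ other than itself, so the covering will have to be assembled from several carefully chosen star clusters or via an iterated splitting argument, and one must match this combinatorics precisely to the explicit formula for $\alpha$.
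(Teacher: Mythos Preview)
Your setup (the $y$-label description, the base case $l\le 2$ via the cone point $0$, and the link computation for $v$ with label $\{i_1\}$) is correct and matches the paper's. The gap is in the recursive structure. After deleting $v$, the complex $K\setminus v$ is \emph{not} of the form $\cl(L_{m'}^2)$ for any smaller $m'$: compared with $\cl(L_{m-2^{i_1}}^2)$ it still carries the $i_1$ extra vertices with labels $\{i_1,p\}$, and these do not collapse away (each has a wedge-of-circles link, not a contractible one). So the strong induction on $m$ you set up cannot be invoked, and the recurrence you quote, $\alpha(m-1)-\alpha((m-1)-2^{i_l})=(l-2)(i_l+1)$, is about peeling off the \emph{lowest} bit of $m-1$ and has nothing to do with the single deletion of the $\{i_1\}$-vertex, which contributes only $l-2$ spheres. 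Your star-cluster suggestion for the null-homotopy also fails for the reason you identify: the lower clique $\{\{i_s\}\}_{s\ge 2}$ does not sit inside $\SC_{K\setminus v}(\sigma)$ for any simplex other than itself, so a single star cluster cannot absorb $\lk_K(v)$.

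The paper closes this gap by changing the induction. Instead of inducting on $m$, it runs an \emph{internal} downward induction on $k$ inside a fixed $m$: let $L_{m,k}^2$ be the subgraph on all size-2 labels together with the singletons $\{i_k\},\ldots,\{i_l\}$. The base $k=l+1$ (only size-2 labels, a subcomplex of the Kneser independence complex) is handled by Barmak's star cluster of $\sigma(i_1)=\{x_{i_1,j}\}_{j<i_1}$, giving a wedge of $\sum_{s=3}^l(s-2)i_s$ copies of $S^2$. For the step $k+1\to k$ one adds the vertex $x_{i_k}$; its link is exactly the two-cliques-with-matching you computed (now $\bigvee^{l-k-1}S^1$), and the inclusion into $\cl(L_{m,k+1}^2)$ is null-homotopic \emph{automatically}, because by the internal induction that target is already a wedge of $2$-spheres and hence simply connected. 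No star-cluster covering of the link is needed. Summing the contributions $\max(l-k-1,0)$ over $k=1,\ldots,l$ gives the extra $\binom{l-1}{2}$, and together with the base case one obtains $\alpha(m-1)$.
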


\begin{proof}
If $x$ is a nonnegative integer written in binary notation and $A\subseteq\N$ is a finite set, then we denote by $x_A$ the number obtained from $x$ by flipping the $i$-th digit for all $i\in A$.
We will write $x_i$ or $x_{i,j}$ as a shorthand for $x_{\{i\}}$ and $x_{\{i,j\}}$ for $i>j$, respectively.

Let $x=m-1$.
The vertices of $L_m^2$ are all numbers \emph{smaller than $x$} which can be obtained from $x$ by flipping one or two digits.
These are
\begin{itemize}
\item[(i)] $x_i$ whenever the $i$-th digit of $x$ is $1$, and
\item[(ii)] $x_{i,j}$, for all $i>j\geq 0$, whenever the $i$-th digit of $x$ is $1$.
\end{itemize}
There is an edge between vertices $x_A$ and $x_B$ in $L_m^2$ if and only if the symmetric difference satisfies $|A\triangle B|\leq 2$.

Recall that we decompose $x$ as $x=2^{i_1}+\cdots+2^{i_l}$, where $i_1>\cdots>i_l$.
In particular, all the vertices of the form (i) above are $x_{i_1},\ldots,x_{i_l}$, and the vertices for (ii) are $x_{i,j}$, where $i=i_k$ for some $k$, and where $0\leq j<i_k$ is arbitrary.
For $1\leq k\leq l+1$, let $L_{m,k}^2$ be the subgraph of $L_m^2$ induced by all the vertices of the form $x_{i,j}$,  together with $x_{i_k},\ldots,x_{i_l}$.
In particular $L_{m,1}^2=L_m^2$ is the full graph, and $L_{m,l+1}^2$ denotes the subgraph induced by the $x_{i,j}$ vertices only; see Figure~\ref{fig:L28k}.

\begin{figure}[h]
\includegraphics[width=\textwidth]{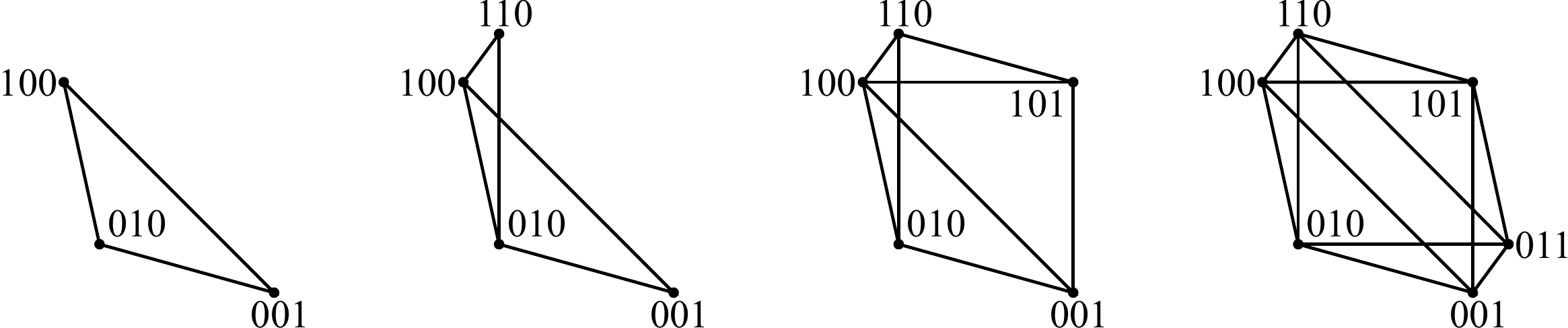}
\caption{
Let $m=8$, so $x=m-1=2^2+2^1+2^0$ is $111$ in binary.
We show the graphs graphs $L_{8,k}^2$ for $k=4,3,2,1$.
Note $L_{8,1}^2=L_8^2$.
}
\label{fig:L28k}
\end{figure}
 
We will prove by induction on $k$, starting from $k=l+1$ and going down to $k=1$, that $\cl(L_{m,k}^2)$ is homotopy equivalent to a wedge of 2-spheres, where the number of 2-spheres in the wedge sum is
\begin{equation}
\label{eq:num2spheres}
\sum_{s=3}^l (s-2)i_s + \sum_{s=k}^{l-2
}(l-s-1).
\end{equation}
The lemma will then follow by taking $k=1$, since $\sum_{s=1}^{l-2}(l-s-1)=\binom{l-1}{2}$, and so the number of 2-spheres is $\alpha(m-1)$ (see Equation~\eqref{eq:alpha}).

We start the induction at $k=l+1$.
Recall that $L_{m,l+1}^2$ is the subgraph of $L_m^2$ induced by the $x_{i,j}$ vertices only.
Note that if $|A|=|B|=2$, then $x_A$ and $x_B$ are connected by an edge in $L_{m,l+1}^2$ if and only if $A\cap B\neq \emptyset$.
This establishes $\cl(L_{m,l+1}^2)$ as an induced subcomplex\footnote{If $x=2^n-1$, i.e.\
if $m=2^n$, then we have exactly the independence complex of the Kneser graph, since there is a vertex $x_{A}$ for every subset $A\subseteq\{0,\ldots,n-1\}$ with $|A|=2$.
A sign that this method may be harder for $r>2$ is that the independence complexes of higher Kneser graphs are not well-understood, to our knowledge.} of the independence complex of a Kneser graph with vertices (such as $x_{i,j}$) corresponding to subsets of size two.
We will determine the homotopy type of $\cl(L_{m,l+1}^2)$ by borrowing Barmak's ideas from \cite[Theorem~4.11]{barmak2013star}.
The maximal simplices of $\cl(L_{m,l+1}^2)$ are of the form
\begin{itemize}
\item$\sigma(i_s):=\{x_{i_1,i_s},\ldots,x_{i_{s-1},i_s}\}\cup\{x_{i_s,j}~|~j<i_s\}$, as $s$ varies over $1,\ldots,l$, or
\item $\tau(i_t,i_s,j):=\{x_{i_t,i_s},x_{i_t,j},x_{i_s,j}\}$, for $i_t>i_s>j$.
\end{itemize}
The star cluster $\SC(\sigma(i_1))=\SC_{\cl(L_{m,l+1}^2)}(\sigma(i_1))$ contains all of the simplices $\sigma(i_s)$ because $\sigma(i_s)\in\st(x_{i_1,i_s})\subseteq \SC(\sigma(i_1))$ for every $s$.
Also, $\tau(i_1,i_s,j)\in \SC(\sigma(i_1))$ for any $i_1>i_s>j$.
However if $t\ge 2$, then the simplex $\tau(i_t,i_s,j)$ is not in $\SC(\sigma(i_1))$, although its boundary is.
Therefore, $\cl(L_{m,l+1}^2)$ is obtained from $\SC(\sigma(i_1))$ by attaching 2-cells along their boundaries, one 2-cell for each triple $i_t,i_s,j$ with $i_t>i_s>j\ge 0$ and $t\ge 2$.
Recall that $\SC(\sigma(i_1))$ is contractible by~\cite[Lemma~3.2]{barmak2013star}, and therefore $\cl(L_{m,l+1}^2)$ is homotopy equivalent to the quotient $\cl(L_{m,l+1}^2)/\SC(\sigma(i_1))$, which by the prior sentence is homotopy to a wedge sum of 2-spheres.
The number of 2-spheres is equal to the number of $3$-tuples $i_t>i_s>j\geq 0$ with $t\ge 2$, which is
$\sum_{s=3}^l (s-2)i_s$.
Indeed, once we fix $s$, then there are $i_s$ ways to choose $j$ and $s-2$ ways to choose $t$.
So $\cl(L_{m,l+1}^2)$ is homotopy equivalent to a wedge sum of $\sum_{s=3}^l (s-2)i_s$ copies of the 2-sphere, as desired for Equation~\eqref{eq:num2spheres}.


For the inductive step, suppose we have Equation~\eqref{eq:num2spheres} for $k+1\le l+1$, and we want to prove it for $k$.
Recall that $L_{m,k}^2$ arises from $L_{m,k+1}^2$ by adding the vertex $x_{i_k}$, i.e.\ $\cl(L_{m,k}^2)\setminus x_{i_k}=\cl(L_{m,k+1}^2)$.
Consider the link $\lk_{\cl(L_{m,k}^2)}(x_{i_k})$.
It is the clique complex of the subgraph of $L_m^2$ induced by the vertex set $X_1\cup X_2\cup X_3$, where we define
\begin{align*}
X_1&=\{x_{i_{k+1}},x_{i_{k+2}},\ldots,x_{i_l}\}\\
X_2&=\{x_{i_k,i_k-1},x_{i_k,i_k-2},\ldots,x_{i_k,0}\}\\
X_3&=\{x_{i_1,i_k},x_{i_2,i_k},\ldots,x_{i_{k-1},i_k}\}.
\end{align*}
Each of the subsets $X_1$ and $X_2\cup X_3$ induces a clique, hence a simplex in the clique complex.
(The subset $X_1$ is empty if $k=l$.)
Moreover, there are edges between $x_{i_s}$ and $x_{i_k,i_s}$ for every $s=k+1, \ldots, l$, and no other edges.
It follows that $\lk_{\cl(L_{m,k}^2)}(x_{i_k})$ is contractible when $k=l$, as it is a single simplex.
Similarly, $\lk_{\cl(L_{m,k}^2)}(x_{i_k})$ is contractible when $k=l-1$, as it is obtained by connecting two simplices via a single edge.
Furthermore, $\lk_{\cl(L_{m,k}^2)}(x_{i_k})$ is homotopy equivalent to a wedge of $l-k-1$ copies of the circle $S^1$ for $k\le l-2$.
The inclusion $\lk_{\cl(L_{m,k}^2)}(x_{i_k})\hookrightarrow \cl(L_{m,k+1}^2)$ is null-homotopic since, by induction, the latter is homotopy equivalent to a wedge of $2$-spheres.
By Lemma~\ref{lem:splitting} this leads to the splitting
\[\cl(L_{m,k}^2)\simeq \cl(L_{m,k+1}^2) \vee \Sigma\ \lk_{\cl(L_{m,k}^2)}(x_{i_k})\]
up to homotopy type.
The induction step proving Equation~\eqref{eq:num2spheres} is therefore complete, since the number of $S^2$ wedge summands increases by zero if $k=l$ or $l-1$, and by $l-k-1$ if $1\leq k\le l-2$.
\end{proof}

\begin{figure}[h]
\captionsetup{width=0.96\textwidth}
\centering
\includegraphics[width=1.5in]{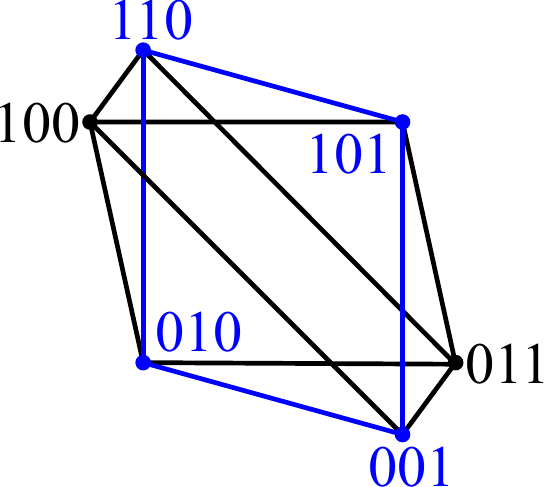}
\caption{
A figure for the inductive step for Equation~\eqref{eq:num2spheres} in the proof of Lemma~\ref{lem:link}.
Let $m=8$, so $x=m-1=2^2+2^1+2^0$ is $111$ in binary.
We have $i_1=2$, and $x_{i_1}=2^1+2^0$ is $011$ in binary.
We see that $\lk_{\cl(L_{8,1}^2)}(x_{i_1})$, drawn in blue, is a single circle.
In this example, $X_1$ is the single edge between $101$ and $110$, and $X_2\cup X_3=X_2$ is the single edge between $001$ and $010$.}
\end{figure}

\begin{theorem}\label{thm:Gm2}
The space $\cl(G_m^2)$ is homotopy equivalent to a wedge of $\sum_{k=0}^{m-1}\alpha(k)$ copies of the $3$-sphere.
\end{theorem}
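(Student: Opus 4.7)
The plan is to prove this by induction on $m$, peeling off the ``largest'' vertex $m-1$ from $\cl(G_m^2)$ and applying Lemma~\ref{lem:splitting}. The base case $m = 1$ is trivial: $\cl(G_1^2)$ is a single point, and $\sum_{k=0}^{0}\alpha(k) = \alpha(0) = 0$, so both sides are contractible (using the convention that a $0$-fold wedge sum is a point).

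For the inductive step, assume the statement for $m-1$, and set $K = \cl(G_m^2)$ and $v = m-1$. First observe that $K \setminus v = \cl(G_{m-1}^2)$, since removing the vertex $m-1$ recovers the clique complex of the graph on the remaining vertex set $\{0, 1, \ldots, m-2\} = V_{m-1}$. By the inductive hypothesis, $K \setminus v \simeq \bigvee^{N}S^3$ where $N = \sum_{k=0}^{m-2}\alpha(k)$. Second, the link $\lk_K(v)$ is precisely $\cl(L_m^2)$ by definition of $L_m^2$ as the induced subgraph on the neighborhood of $v$ in $G_m^2$; by Lemma~\ref{lem:link} we therefore have $\lk_K(v) \simeq \bigvee^{\alpha(m-1)}S^2$.

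The key step is to verify the hypothesis of Lemma~\ref{lem:splitting}, namely that the inclusion $\iota\colon \lk_K(v) \hookrightarrow K\setminus v$ is null-homotopic. Since $K\setminus v$ is homotopy equivalent to a wedge of $3$-spheres, it is simply connected with $H_2(K\setminus v) = 0$, so the Hurewicz theorem yields $\pi_2(K\setminus v) = 0$. Any map from a wedge of $2$-spheres into a simply-connected CW complex is classified up to homotopy by its components in $\pi_2$ of the target, so $\iota$ is null-homotopic. Applying Lemma~\ref{lem:splitting} and using $\Sigma S^2 = S^3$, one concludes
\[ \cl(G_m^2) \simeq (K\setminus v) \vee \Sigma\, \lk_K(v) \simeq \bigvee^{N}S^3 \vee \bigvee^{\alpha(m-1)}S^3 = \bigvee^{\sum_{k=0}^{m-1}\alpha(k)}S^3, \]
completing the induction.

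The main obstacle, really the only content beyond bookkeeping, is the null-homotopy of $\iota$; but this is a soft dimensional argument once the inductive hypothesis supplies simple connectivity and vanishing of $H_2$ for $K\setminus v$. Theorem~\ref{thm:main} then follows as the special case $m = 2^n$, with the closed form $c_n = \sum_{k=0}^{2^n-1}\alpha(k)$ reducing to $\sum_{0 \le j < i < n}(j+1)(2^{n-2} - 2^{i-1})$ by a direct summation argument tracking the contribution of each pair of bit positions across all $k \in \{0, \ldots, 2^n-1\}$.
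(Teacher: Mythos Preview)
Your proof is correct and follows essentially the same approach as the paper: induction on $m$, identifying $K\setminus v=\cl(G_{m-1}^2)$ and $\lk_K(v)=\cl(L_m^2)$, and then applying Lemma~\ref{lem:splitting} once the inclusion is shown to be null-homotopic. Your justification of the null-homotopy via Hurewicz and $\pi_2(K\setminus v)=0$ is a slightly more explicit version of the paper's one-line observation that a map from a homotopy wedge of $2$-spheres into a homotopy wedge of $3$-spheres is null-homotopic.
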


\begin{proof}
We proceed by induction on $m$.
For base case $m=1$, note that $\cl(G_1^2)$ is a single vertex.
This agrees with the fact that for $m=1$ the sum returns $\alpha(0)=0$, and the 0-fold wedge sum of 3-spheres is a single point as well.

For the inductive step, assume that $\cl(G_{m-1}^2)$ is homotopy equivalent to a wedge of $\sum_{k=0}^{m-2}\alpha(k)$ copies of the 3-sphere.
Recall that $\cl(G_m^2)\setminus m = \cl(G_{m-1}^2)$, and that $\lk_{\cl(G_m^2)}(m) = \cl(L_m^2)$.
We now consider the inclusion $\cl(L_m^2)\hookrightarrow \cl(G_{m-1}^2)$.
By induction, and by the description of $\cl(L_m^2)$, this map is null-homotopic, because it goes from a homotopy wedge of 2-spheres to a homotopy wedge of 3-spheres.
Therefore, by Lemma~\ref{lem:splitting} we have
\[\cl(G_m^2)\simeq \cl(G_{m-1}^2)\vee \Sigma\ \cl(L_m^2)\]
up to homotopy type.
By Lemma~\ref{lem:link} this proves that $\cl(G_m^2)$ is homotopy equivalent to a wedge of 3-spheres, and that the number of 3-spheres has increased by $\alpha(m-1)$ over those in $\cl(G_{m-1}^2)$.
This give $\sum_{k=0}^{m-1}\alpha(k)$ copies of the 3-sphere in total.
\end{proof}

The main theorem now follows as a special case.

\begin{proof}[Proof of Theorem~\ref{thm:main}]
Let $n\ge 3$.
Our task is to show that the Vietoris--Rips complex $\vr{Q_n}{2}$ is homotopy equivalent to the $c_n$-fold wedge sum of 3-spheres, namely 
\[\vr{Q_n}{2}\simeq\textstyle{\bigvee^{c_n}}S^3,\quad\quad\text{where}\quad\quad c_n=\displaystyle{\sum_{0\le j<i<n}(j+1)(2^{n-2}-2^{i-1})}.\]
From Theorem~\ref{thm:Gm2}, we have that
\[ \vr{Q_n}{2}=\cl(G_{2^n}^2)\simeq\textstyle{\bigvee^{\sum_{k=0}^{2^n-1}\alpha(k)}} S^3. \]
We compute $\sum_{k=0}^{2^n-1}\alpha(k)$ as follows.
Recall $\alpha(k)=(i_3+1)+2(i_4+1)+\cdots+(l-2)(i_l+1)$ if $k=2^{i_1}+2^{i_2}+\cdots+2^{i_l}$ with $i_1>i_2>\cdots >i_l$.
By looking carefully, we see that the sum $\sum_{k=0}^{2^n-1}\alpha(k)$ can be obtained as follows.
For every pair of positions $(i,j)$ with $i>j$ in the binary representation, if we add $j+1$ once for every instance where $k=\cdots+2^i+\cdots+2^j+\cdots\le 2^n-1$ and $2^i$ is not the leading term in $k$, then the result will be $\sum_{k=0}^{2^n-1}\alpha(k)$.
There are $2^{n-2}-2^{i-1}$ numbers $k$ with $0\le k\le 2^n-1$ for which that happens, giving
\[\sum_{k=0}^{2^n-1}\alpha(k) = \displaystyle{\sum_{0\le j<i<n}(j+1)(2^{n-2}-2^{i-1})} = c_n.\]
\end{proof}

\section{Conclusion and open questions}\label{sec:conclusion}

We conclude with a description of some open questions.
We remind the reader of Questions~\ref{ques:wedge}--\ref{ques:homotopy} in the introduction, which ask if $\vr{Q_n}{r}$ is always a wedge of spheres, and what the homology groups and homotopy types of $\vr{Q_n}{r}$ are for $3\le r\le n-2$.
Below we give some more refined versions of these questions.

\begin{question}
Does $\cl(G_m^r)$ collapse to its $(2^r-1)$-skeleton?
If so, this would in particular imply that the homology groups $H_i(\cl(G_m^r))$ are zero for $i\geq 2^r$.
\end{question}

\begin{question}\label{ques:homology-summand}
Is $\redhom_i(\cl(G_m^r))\cong\redhom_i(\cl(G_{m-1}^r))\oplus \redhom_{i-1}(\cl(L_m^r))$, where $\redhom_i$ denotes reduced homology?
\end{question}

\begin{question}\label{ques:homotopy-summand}
Is the inclusion $\iota\colon \cl(L_m^r)\hookrightarrow \cl(G_{m-1}^r)$ null-homotopic for all $m$ and $r$?
\end{question}

Recall that $\cl(L_m^r)$ is the link of the vertex $m-1$ in $\cl(G_m^r)$.
The inclusion $\iota\colon\cl(L_m^r)\hookrightarrow \cl(G_{m-1}^r)$ has as cofibre $\cl(G_m^r)$, i.e.\
\[\cl(G_m^r)=\cl(G_{m-1}^r)\cup_\iota \st_{\cl(G_m^r)}(m-1).\]
By Lemma~\ref{lem:splitting}, an affirmative answer to Question~\ref{ques:homotopy-summand} would provide a homotopy equivalence $\cl(G_m^r)\simeq \cl(G_{m-1}^r)\vee \Sigma\ \cl(L_m^r)$.
So an affirmative answer to Question~\ref{ques:homotopy-summand} would also provide an affirmative answer to Question~\ref{ques:homology-summand}, and it would also follow that the inclusion $\cl(G_{m-1}^r)\hookrightarrow \cl(G_m^r)$ induces an injection in homology.

In the proof of Theorem~\ref{thm:Gm2} we show that Question~\ref{ques:homotopy-summand} is answered in the affirmative when $r=2$.
Computational evidence shows that Question~\ref{ques:homology-summand} is also answered in the affirmative when $r=3$ and $n\leq 64$. 

\section{Acknowledgements}

We would like to thank Kevin Emmett, Ra\'{u}l Rabad\'{a}n, and Daniel Rosenbloom for raising the question of Vietoris--Rips complexes of hypercube graphs.
We would like to thank Karim Adiprasito, Florian Frick, and Simon Zhang for helpful conversations.

\bibliographystyle{plain}
\bibliography{OnVietorisRipsComplexesOfHypercubeGraphs.bib}





\end{document}